\newcommand{\bv}[1]{{\mathbf #1}} 
\newcommand{\bt}[1]{{\mathbf{#1}}}
\newcommand{\funSpace}[1]{\mathcal{#1}} 
\newcommand{\rom}[1]{{{#1}_{\rm r}} }
\newcommand{\R}{\mathbb{R}} %TODO remove
\newcommand{\C}{\mathbb{C}} %TODO remove
\newcommand{\mexp}[1]{{e^{#1}}}
\newcommand{\acro}[1]{{\texttt{#1}}}  %%%acronyms
\newcommand{\uss}{w}
\newcommand{\xiss}{{\boldsymbol{\xi}}}
\newcommand{\Ass}{\bt A_{\rm \xi}}
\newcommand{\css}{\bt c_{\rm \xi}}
\newcommand{\Pss}{\boldsymbol{\Pi}} %solution to Sylvester equation
\newcommand{\basVec}{g} %map to steady state
\newcommand{\iu}{{\mathfrak{i}\mkern1mu}} %imaginary unit
\newcommand{\errom}{\mathit{E}} % ROM error
\newcommand{\bunderline}[1]{{\breve{#1}}} %shorten underline
\newcommand{\xer}{\bunderline{\bv{x}}}
\newcommand{\Aer}{\bunderline{\bt{A}}}
\newcommand{\ber}{\bunderline{\bv{b}}}
\newcommand{\cer}{\bunderline{\bv{c}}}
\newcommand{\der}{\bunderline{d}}
\newcommand{\yer}{\bunderline{{y}}}
\newcommand{\uer}{\bunderline{{u}}}
\newcommand{\Qer}{\bunderline{{\bt{Q}}}}
\newcommand{\Fss}{\bunderline{\mathcal{F}}} %map to steady state
\begin{document}

\begin{frontmatter}
%\runtitle{Insert a suggested running title}  % Running title for regular 
                                              % papers but only if the title  
                                              % is over 5 words. Running title 
                                              % is not shown in output.

\title{A time domain a posteriori error bound for balancing-related model order reduction\thanksref{footnoteinfo}} % Title, preferably not more 
%%\title{Treating inhomogeneous initial conditions in model reduction by balanced truncation\thanksref{footnoteinfo}} % Title, preferably not more 
                                                % than 10 words.

\thanks[footnoteinfo]{%%This paper was not presented at any IFAC meeting.
Corresponding author B.~Liljegren-Sailer. Tel. +49 651 201-3468.
}
%%%Fax +XXXIX-VI-mmmxxv.}

\author[Trier]{Bj{\"o}rn Liljegren-Sailer}\ead{bjoern.sailer@uni-trier.de} %%%,    % Add the 
%%\author[Rome]{Julius Caesar}\ead{julius@caesar.ir},               % e-mail address 
%%\author[Baiae]{Publius Maro Vergilius}\ead{vergilius@culture.ir}  % (ead) as shown

\address[Trier]{Universit{\"a}t Trier, FB IV - Mathematik, Lehrstuhl Modellierung und Numerik, D-54286 Trier, Germany}  % Please supply                                              

\begin{keyword}                           % Five to ten keywords,  
error bound; a posteriori; balanced truncation; balancing-related; model order reduction.               % chosen from the IFAC 
\end{keyword}                             % keyword list or with the 
                                          % help of the Automatica 
    
\begin{abstract}
The aim in model order reduction is to approximate an input-output map described by a large-scale dynamical system with a low-dimensional and cheaper-to-evaluate reduced order model. While high fidelity can be achieved by a variety of methods, only a few of them allow for rigorous error control. In this paper, we propose a rigorous error bound for the reduction of linear systems with balancing-related reduction methods. More specifically, we consider the simulation over a finite time interval and provide an a posteriori adaption of the standard a priori bound for Balanced Truncation and Balanced Singular Perturbation Approximation in that setting, which improves the error estimation while still yielding a rigorous bound. Our result is based on an error splitting induced by a Fourier series approximation of the input and a subsequent refined error analysis. We make use of system-theoretic concepts, such as the notion of signal generator driven systems, steady-states and observability. Our bound is also applicable in the presence of nonzero initial conditions. Numerical evidence for the sharpness of the bound is given.
\end{abstract}
                                          % keyword wizard
\end{frontmatter}

%%---
\section{Introduction}\label{sec1}
Consider the linear time-invariant system
\begin{align}
\begin{aligned}\label{eq:fom}
 \dot{\bv x} (t) &= \bt A \bv x(t) + \bv b  u(t), \hspace{0.3cm} 
 \hspace{0.5cm} \bv x(0) = \bv x_0 \in \R^N,\\
   y(t) & = \bv c \bv x(t) + d u(t)
\end{aligned}  
\end{align}
for $t \in [0,T]$ with state $\bv x:[0,T]\rightarrow \R^N$, initial conditions $\bv x_0 $ and an asymptotically stable state equation, i.e., $\bt A\in \R^{N,N}$ Hurwitz. Moreover, let $\bv b \in \R^{N,1}$, $\bv c \in \R^{1,N}$ and $d\in \R$. When the state dimension $N$ is large compared to the dimension of the input $u$ and output $y$ -- for ease of presentation we assume the scalar-valued case $u,y:  [0,T]\rightarrow \R$ -- the computational costs for evaluating \eqref{eq:fom} many times can become very high. This model is referred to as full order model (\acro{FOM}). We are interested in a lower-dimensional reduced order model (\acro{ROM}) that is cheaper to evaluate and still sufficiently accurate, in the sense that it reproduces a similar output $\rom{y} \approx y$ for the inputs $u$ of interest. The considered \acro{ROM} reads 
\begin{align}
\begin{aligned}\label{eq:rom}
 \rom{\dot{\bv{x}}} (t) &= \rom{\bt{A}} \rom{\bv{x}}(t)+ \rom{\bv b}  u(t), \hspace{0.5cm} \rom{\bv x}(0) = \rom{\bv{x}}_0 \in \R^n,\\
   \rom{y}(t) &= \rom{\bv c} \rom{\bv x}(t) + \rom{d} u(t),
\end{aligned}
\end{align}
with reduced state $\rom{\bv x}:[0,T]\rightarrow \R^n$, $n\ll N$. A variety of model order reduction methods (\acro{MOR}) have been proposed, see e.g., \cite{book:antoulas2005,book:dimred2003}. For example, in the projection-based approaches one seeks for appropriate reduction bases $\bt V, \bt W \in \mathbb{R}^{N,n}$ with $\bt W^T \bt V = \bt I_n$ (unit matrix). The \acro{ROM} is then defined by $ \rom{\bv{x}}_0= \bt W^T \bv{x}_0$, $\rom{\bt A} = \bt W^T \bt A \bt V$, $\rom{\bt{b}} = \bt W^T \bv b$, $\rom{\bt{c}} =  \bv c \bt V$ and  $\rom{d}= d$. The reduction error
\begin{align*}
\errom(t) :=  y(t)-\rom{y}(t), \hspace{1cm} t \in [0,T],
\end{align*}
is not known in practice, so it is of high interest to have bounds or estimates on it.  For its analysis it is useful to introduce the error system
\begin{align}
\begin{aligned} \label{eq:errSys}
\dot{\xer}(t) &= \underbrace{\begin{bmatrix}
		\bt A & \\
		 & \rom{\bt{A}}
	\end{bmatrix}}_{=: \Aer} \xer(t) +
	\underbrace{ \begin{bmatrix} \bv b \\
	\rom{\bv{b}}
	\end{bmatrix}}_{=: \ber } \uer(t), \quad \xer(0) = \xer_0,\\
	{\yer(t)} &= \underbrace{\begin{bmatrix} \bv c ,&
	-\rom{\bv{c}} 
	\end{bmatrix}}_{=:\cer} \xer(t) + 
	\underbrace{(d-\rom{d})}_{=:\der} \uer(t).
\end{aligned}
\end{align}
By construction, $\yer(t) =\errom(t)$ for $t \in [0,T]$ if $\uer =u$ and $\xer_0 = [\bv x_0^T, \rom{\bv x}_0^T]^T$ are chosen. Thus, \eqref{eq:errSys} is just a concise representation of the error dynamics.

%% related to the \acro{FOM} and the \acro{ROM}.
In this paper, a rigorous a posteriori bound for the \mbox{$\funSpace{L}^2$-error} in finite time $[0,T]$ is derived from system-theoretic concepts. It is most naturally applied in combination with balancing-related \acro{MOR} methods, such as Balanced Truncation (\acro{BT}) and Balanced Singular Perturbation Approximation (\acro{SPA}) \cite{book:green2012linear,art:BT-moore1981,art:SPA-liu1989}, since it exploits and relies on the key features of these methods, summarized in the following assumption.
\begin{assum} \label{assum:RomRequired}
%%\text{} \\[-1.5em]
The \acro{FOM} is considered for square-integrable input $u:[0,T] \rightarrow \R$ and reduced by a method, for which the following holds:
\begin{enumerate}[i$_)$]
\item \label{cond:balrel-i} An a priori bound is accessible for the \acro{ROM}. That is, assuming zero initial conditions ($\bv x_0= \bv 0$), it holds 
$$
\| \errom \|_{\funSpace{L}^2_T} :=  \sqrt{\int_{0}^T \text{\textbar} \errom(t) \text{\textbar}^2 dt } 
\, \leq   \alpha   \|u \|_{\funSpace{L}^2_T} 
$$
for a known constant $\alpha>0$. %%(It is tacitly assumed that the input $u$ is square-integrable throughout.) 
\item The observability Gramian $\bt Q$ of the \acro{FOM} is required for the construction of the \acro{ROM}. \label{cond:balrel-ii}
\item The \acro{ROM} is asymptotically stable. \label{cond:balrel-iii}
\end{enumerate}
\end{assum}
Here and in the following, $\|\boldsymbol{\cdot} \|_{\funSpace{L}^2_T}$ refers to the $\funSpace{L}^2$-norm on the time-interval $[0,T]$. The availability of an \textit{a priori bound}  as in Assumption~\ref{assum:RomRequired} is of high practical value on its own, especially for the selection of an appropriate \acro{ROM} dimension $n$. But this a priori result yields a worst-case estimation on the error, since it does not take into account any specific knowledge on the simulation setup. The bound we propose in this paper can be considered an \textit{a posteriori} adaption of the a priori bound. The main idea is to filter out a signal $\uss$ and to apply the pessimistic a priori bound to the remainder $u- \uss$ only. The approximation $\uss$ is chosen such that a more explicit error analysis becomes feasible that leads to less overestimation. We further split the error part related to $\uss$ into its long-time behavior, also denoted as \textit{steady-state}, and a term that is equivalent to the output energy a certain initial state has in the error system \eqref{eq:errSys}. The steady-state can be determined in an offline-online efficient way, and the other error term can be sharply bounded using the approach from \cite{art:bls-InhomInit}.

%%$\uss \approx u$, for which a more explicit error analysis is feasible.
%%
%% that generally leads to less overestimation of the error. The main idea is to filter out a signal $\uss$, with $\uss \approx u$, for which a more explicit error analysis is feasible. Using the linearity of the systems, the reduction error is split into three parts, and the error bound we propose is composed like
%%%%%, so that the a priori bound only has to be used for the remainder $u-\uss \ll u$. The error bound we propose is composed like
%%\begin{align*}
%%\| y - \rom{y } \|_{\funSpace{L}^2_T} &\leq    \textit{'Steady-state error for $\uss$'} + \alpha   \|u- \uss \|_{\funSpace{L}^2_T} \\
%%	& \hspace{1.35cm} \textit{+'Transient response error for $\uss$'},
%%\end{align*}
%%see  Theorem~\ref{thrm:aposteriori} for the precise statement. Thus, the a priori bound is only applied to the remainder $u-\uss \ll u$ in our approach. In order to make the calculation of the \textit{'steady-state error'} feasible, we require $\uss$ to have a characterization in terms of a linear signal generator~\cite{art:steadystate-mor-2010}. More specifically, we choose $\uss = \uss^K$ with $\uss^K$ given as the truncated Fourier series approximation of $u$. Finally, a sharp bound for the \textit{'transient response error'} is obtained with the approach from \cite{art:bls-InhomInit}, exploiting that this term is equivalent to the output energy a certain initial state has in the error system \eqref{eq:errSys}.

%% Literature overview
The peculiarity of our proposed error bound is that it takes into account information of the input $u$, i.e., is an a posteriori result, and, simultaneously, exploits the features inherent in balancing-related \acro{MOR}, cf., Assumption~\ref{assum:RomRequired}. Consequently, it outperforms
%%yields to less severe overestimation of the error compared to 
other rigorous a posteriori bounds, e.g., the residual based bounds derived in \cite{haasdonk2011efficient}. Let us mention that effective error estimates have been proposed in literature \cite{feng2017some,feng2019new}, but those are typically not rigorous and thus may lead to an underestimation of the actual error, which is problematic for certain applications.

%% OUTLINE
The structure of the paper revolves around the derivation of our error bound. In Section~\ref{sec:BalRedResults}, we briefly outline the standard a priori bound for balancing-related \acro{MOR}, and the error bound from \cite{art:bls-InhomInit} for inhomogeneous initial conditions. Those are ingredients of our bound. Additionally, we make use of the notion of steady-states for a certain class of approximations on the input in our error analysis; the respective results are stated in \mbox{Sections~\ref{sec:steadystate}-\ref{sec:steadyExp}.} Our a posteriori bound is proven in Section~\ref{sec:MainResultErr}. Its effectivity is numerically demonstrated in Section~\ref{sec:NumResults} at the example of two academic benchmarks.

% contain general results for linear signal generator-driven systems from literature and more specific results for the case where a truncated Fourier series is used as the signal (second error term). In Section~\ref{sec:BalRedResults},  we briefly outline the standard a priori bound for balancing-related \acro{MOR}, and the error bound from \cite{art:bls-InhomInit} for inhomogeneous initial conditions (second and third. Our error bound (the main result) is proven in Section~\ref{sec:MainResultErr} and its effectivity is numerically demonstrated at two academic benchmarks in Section~\ref{sec:NumResults}.

%%
\section{Results for balancing-related MOR} \label{sec:BalRedResults}

The observability and controllability of a state in the \acro{FOM} can be quantified in terms of the observability Gramian~$\bt{Q}$ and controllability Gramian $\bt P$, defined as
\begin{align*}
\bt{Q} = \int_{0}^\infty \mexp{t \bt A^T} \bv c^T \bv c \mexp{t \bt A} dt, \quad
\bt{P} = \int_{0}^\infty \mexp{t \bt A} \bv b \bv b^T \mexp{t \bt A^T} dt.
\end{align*}
These matrices are well-defined for asymptotically stable systems. They play a crucial role in the analysis and implementation of \acro{BT} and \acro{SPA}, see \cite{book:antoulas2005}.  Notably, these balancing-related \acro{MOR} methods allow for a rigorous error analysis that is based on the Gramians. In this section, we outline the error bounds from literature which also play a role in our refined error analysis. These results require, respectively justify, Assumption~\ref{assum:RomRequired} in our approach.

\subsection{A priori error bound}
Asymptotically stable systems can be transformed in a so-called balanced form, in which the Gramians $\bt Q$ and $\bt P$ are simultaneously diagonalized and equal. The diagonal entries  $\sigma_1 \geq \sigma_2 \geq \ldots \geq \sigma_N \geq 0$ of the resulting diagonal matrix are called the Hankel Singular Values (\acro{HSV}s) and represent a measure for observability and controllability of the balanced states. In both \acro{BT} and \acro{SPA}, the \acro{ROM} is composed of the states related to the largest \acro{HSV}s, see \cite{book:green2012linear} for details on the methods. Given the \acro{ROM} dimension $n$ is chosen such that $\sigma_n > \sigma_{n+1}$ and zero initial conditions are considered ($\bv x_0 = \bv 0$), the reduction error can be shown to fulfill the a priori error bound $\| \errom \|_{\funSpace{L}^2_\infty} \leq  \alpha   \|u \|_{\funSpace{L}^2_\infty}$ with $\alpha = 2 \sum_{j = n+1}^{N} \sigma_j$ composed of the neglected \acro{HSV}s, see \cite{book:dimred2003}. As we require a bound on a finite time interval $[0,T]$ (Assumption~\ref{assum:RomRequired}-\ref{cond:balrel-i}$_)$), we apply the former bound on the extension of the input $u: [0,T]\rightarrow \R$ to the infinite interval $[0,\infty)$, obtained by setting $u(t) = 0$ for $t>T$. This yields
\begin{align*} %%\label{eq:balrel-apriori}
\| \errom \|_{\funSpace{L}^2_T} \leq  \| \errom \|_{\funSpace{L}^2_\infty} \leq \alpha   \|u \|_{\funSpace{L}^2_T}.
\end{align*}
%%%%
%%
\subsection{Error originating from initial conditions}
The case with trivial input ($u(t)=0$ for $t\geq 0$) allows for a simple error analysis based on the notion of observability.
In this setting, the output of the error system \eqref{eq:errSys} is solely determined by the initial conditions $\xer(0) =\xer_{0}= [\bv x_{0}^T,\rom{\bv x}_{0}^T]^T$. It reads $y_{\xer_{0}} =\cer \mexp{t\Aer} \xer_0$. As shown in \cite{art:bls-InhomInit}, a bound for its norm follows directly from applying the notion of observability to the error systems, i.e.,
%%
%% $\bv x(0)= \bv x_{c}$ and $\rom{\bv x}(0) = \rom{\bv x}_{c}$ for the \acro{FOM} and \acro{ROM}, and let . The reduction error related to this situation can be estimated efficiently using the approach from \cite{art:bls-InhomInit}. For completeness, summarize the approach here. The error system \eqref{eq:errSys} with  is considered. It holds $\errom(t) = \cer \mexp{t\Aer} \xer_c $ for the reduction error, and thus --as proposed in \cite{art:bls-InhomInit}-- can be bounded from above using
\begin{align}
 \| y_{\xer_{0}}\|_{\funSpace{L}^2_T} &\leq   \| y_{\xer_{0}}\|_{\funSpace{L}^2_\infty}=
  \sqrt{
\xer_{0}^T \Qer
\xer_{0}
}, \label{eq:erInitCond}
\\
&\Qer = \int_{0}^\infty \mexp{t \Aer^T} \cer^T \cer \mexp{t \Aer} dt =
\begin{bmatrix}
 \bt{Q} & \bt{S} \\
 \bt{S}^T & \rom{\bt{Q}}
\end{bmatrix}. \nonumber
\end{align}
Hereby, $\Qer$ is the observability Gramian of the error system, composed of the Gramians $\bt{Q}$ and $\rom{\bt{Q}}$ of the \acro{FOM} and \acro{ROM} and a matrix $\bt{S}\in \R^{N,n}$ that can be determined by solving a sparse-dense Sylvester equation \cite{art:bls-InhomInit}. Note that $\bt{Q}$ is already required for constructing the \acro{ROM} with \acro{BT} or \acro{SPA} and thus is available for the error bound without additional costs (Assumption~\ref{assum:RomRequired}-\ref{cond:balrel-ii}$_)$). Only the matrices $\rom{\bt{Q}}$ and $\bt{S}$ have to be determined, and this has a comparably low computational cost. Once this is done, the evaluation of \eqref{eq:erInitCond} for any initial condition only requires matrix-vector multiplications, and thus can be used as an online-efficient error bound.
\begin{rem} \label{rem:timeLimited}
The inequality \eqref{eq:erInitCond} holds for any $T>0$ and becomes an equality for $T \to \infty$. On the other hand, when using time-limited model order reduction, e.g., \cite{art:redmann2020lt2}, $\Qer$ can be replaced by the time-limited Gramian, and the inequality then also becomes an equality for a finite time $T$. 
\end{rem}

\begin{rem}\label{rem:largeScale}
In a large-scale setting it may be necessary to substitute the Gramians with low-rank approximations for computational reasons, see \cite{book:dimred2003}. In the presence of low-rank errors the results of this section cannot be shown rigorously, but they can still serve as a basis for error estimation.
\end{rem}

\section{Linear signal generator driven systems} \label{sec:steadystate}
Consider the  linear asymptotically stable system
%%\begin{subequations}
\begin{align}
\begin{aligned} \label{eq:genersys}
	\dot{\xer}(t) &= \Aer \xer(t) + \ber \uss(t), \hspace{0.5cm} \xer(0) = \xer_0,\\
	\yer(t) &=  \cer \xer(t) + \der \uss(t),
\end{aligned}
\end{align}
with system matrices as in \eqref{eq:errSys}. We assume it to be driven by a linear signal generator, i.e., $\uss$ to be described a linear autonomous differential equation
\begin{align}
\uss(t) = \css \xiss(t), \hspace{0.8em} \dot{\xiss}(t) = \Ass \xiss(t), \hspace{0.8em} \xiss(0) = \xiss_0 \in \C^{q}, \label{eq:siggen}
\end{align}
with $\Ass \in \C^{q,q}$ and $\css \in \C^{1,q}$. We employ the notion of steady-states from \cite{art:steadystate-mor-2010,art:isidori2008steady} and the representation of the solution to \eqref{eq:genersys} induced by it. Steady-state refers in this context to the long-time behavior the solution approaches independently of the choice of initial condition $\xer_0$. Note that the impact of the initial condition fades away over time due to the asymptotic stability of the system. We assume $\Aer$ and $\Ass$ to not have any common eigenvalues, which implies that the Sylvester equation
%% The steady-state for \eqref{eq:genersys} driven by a linear signal generator is particularly simple to analyze \cite{art:steadystate-mor-2010}. In that case $\uss$ is described by
%%Presuming that the Sylvester equation
\begin{align} \label{eq:sylveq}
 \Aer \Pss + \ber \css = \Pss \Ass
\end{align}
is uniquely solvable for $\Pss \in \R^{N+n, q}$. It follows that
%%\end{subequations}
\begin{align*}
 \begin{bmatrix}
 \Aer & \ber \css \\
 \bt 0 & \Ass
 \end{bmatrix}
  \begin{bmatrix}
 \Pss\\
 \bt{I}_q
 \end{bmatrix}
 &= 
 \begin{bmatrix}
 \Aer \Pss + ( -\Aer \Pss + \Pss \Ass) \\
  \Ass
 \end{bmatrix}
  =   \begin{bmatrix}
 \Pss\\
\bt I_q
 \end{bmatrix} \Ass,
\end{align*}
with $\bt I_q \in \R^{q,q}$ denoting the unit matrix. Using the latter relation, it can be shown that the steady-state of the signal generator driven system reads $\xer_{\rm st}(t) = \Pss \xiss(t)$, $t\geq 0$. This is the specific solution of \eqref{eq:genersys}-\eqref{eq:siggen} with $\xer_0 = \Pss \xiss_0$, since
\begin{align*}
 \frac{d}{dt} 
 \begin{bmatrix}
 	\xer_{\rm st}(t) \\
 	\xiss(t) 
 \end{bmatrix}
 & = 
\begin{bmatrix}
 \Aer & \ber \css \\
 \bt 0 & \Ass
 \end{bmatrix}
  \begin{bmatrix}
 	\Pss \\
 	\bt{I}_q
 \end{bmatrix}\xiss(t) \\
&= 
\begin{bmatrix}
 \Pss\\
\bt I_q
 \end{bmatrix} \Ass \xiss(t) = \begin{bmatrix}
 \Pss\\
\bt I_q
 \end{bmatrix} \frac{d}{dt} \xiss(t).
\end{align*}
%%i.e., the dynamics of the state $\xer_{\rm st}$ copies the one of $\xiss$.
%Thus, we can identify a signal $\uss$ as in \eqref{eq:siggen} with its 
%Note that a signal $\uss$ in terms of a signal generator is not be unique, but the steady-state, but the signal and the exactly one steady-state. 
Thus, we can assign exactly one steady-state to a signal $\uss$ given by a linear signal generator, and the linear mapping
\begin{align*}
	\Fss: \uss(\boldsymbol{\cdot}) \mapsto \Fss(\uss(\boldsymbol{\cdot})) = (\cer \Pss +\der \css) \xiss(\boldsymbol{\cdot})
\end{align*}
is well-defined. Finally note that the output response of \eqref{eq:genersys}-\eqref{eq:siggen} with a general choice of $\xer_0$ reads
\begin{align*}
 \yer(t) %%&= \cer \xer(t) + \der \uss(t) \\
 &= \underbrace{(\cer \Pss  + \der \css) \xiss(t)}_{\text{steady-state response $\Fss(\uss)$}} + \quad \underbrace{\cer e^{\Aer t} (\xer_0 - \Pss \xiss_0 )}_{\text{transient response}},
\end{align*}
and the transient response decays exponentially. An illustration of the convergence to the steady-state is shown in Fig.~\ref{fig:beam-steadyst}, using zero initial conditions. 
%%
%%%%
\begin{figure}[htb]
\floatbox[{\capbeside\thisfloatsetup{capbesideposition={right,top},capbesidewidth=2.8cm}}]{figure}[\FBwidth]
{\caption{Illustration of the convergence to the steady-\mbox{state} response for signal $\uss(t) = \cos(5t)$ and the model given by the Beam benchmark (cf.,\,Section~\ref{subsec:numBeam} and \cite[Section~24]{book:dimred2003}).
\label{fig:beam-steadyst}
}}
{\includegraphics[height=3.5cm]{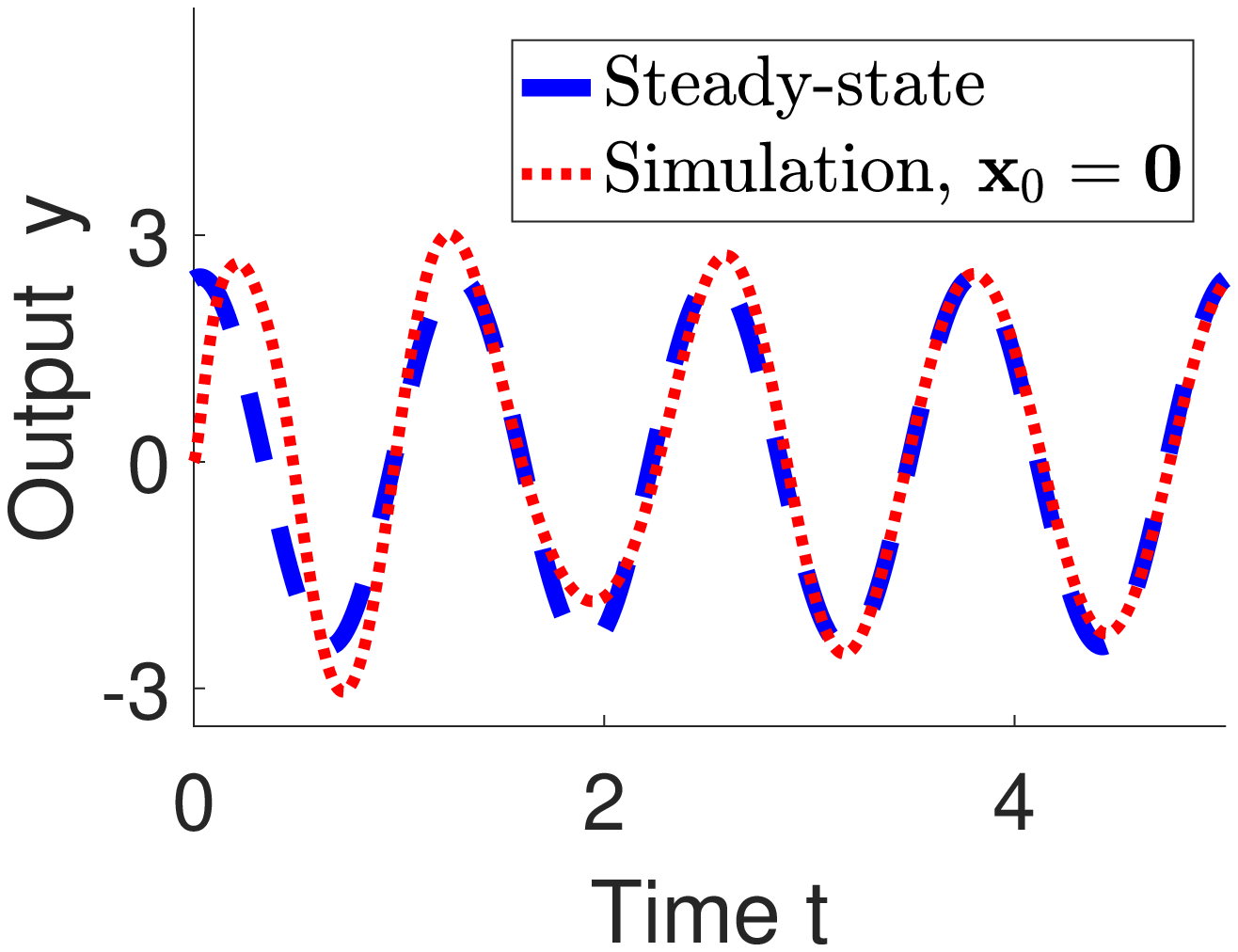}}                    
\end{figure}
\section{Steady-state response to a Fourier series} \label{sec:steadyExp}

For a prescribed order $K\geq 0$, the truncated Fourier series of the function $u: [0,T] \rightarrow \R$ is defined by
\begin{align}
	\uss^K(t) &= \lambda_0 + \sum_{\ell=1}^K \,\lambda_{\ell} {\cos\left(2\pi \ell \frac{t}{T} \right)}+ \lambda_{K+\ell} \,{\sin\left( 2\pi \ell \frac{t}{T}\right)} \nonumber \\
	&=: \sum_{\ell=0}^{2K} \,\lambda_{\ell} \basVec_\ell(t),  \quad t \in [0,T],  \label{eq:FourSig}
\end{align}
%\qquad \lambda_\ell \in \C
with Fourier coefficients $\lambda_\ell =  \int_0^T \basVec_\ell(t) u(t) dt/{\|\basVec_\ell\|_{\funSpace{L}_2^T}^2}$, $\ell = 0, \ldots, 2K$.
The truncated Fourier series $\uss^K$ and the steady-state response $\Fss(\uss^K)$ it infers on a linear system, cf.~Section \ref{sec:steadystate}, are examined in more detail in this section. A computationally convenient representation of $\Fss(\uss^K)$ is key for an efficient implementation of our error bound.
%%In this section, we derive computationally convenient representations for $\Fss(\uss^K)$ and a few related quantities that are required in our error bound. %%To slightly simplify the presentation, we assume $T=2\pi$ from here on.

%%
Let $\iu$ denote the imaginary unit ($\iu^2=-1$), and let $\alpha \in \mathbb{R}$. An exponential pulse $t\mapsto e^{\iu \alpha t}$ can be described by a signal generator as in \eqref{eq:siggen}, whereby a scalar-valued state \mbox{$\xi_\alpha: [0,T]\mapsto \C$} can be used. The related Sylvester equation \eqref{eq:sylveq} simplifies to a linear equation, i.e.,
\begin{align}
\begin{aligned}\label{eq:Pss_alpha}
		e^{\iu \alpha t} &= \xi_\alpha(t), \qquad  \dot{\xi}_\alpha(t) = \iu \alpha \xi_\alpha(t), \quad  \xi_\alpha(0) = 1,  \\
		\Pss_\alpha &= (\iu \alpha \bt I - \Aer)^{-1} \ber \in \C^{N}.
\end{aligned}
\end{align}
%%The steady-state response of a linear system \eqref{eq:genersys} to this signal is well-defined when $\Aer$ has no purely imaginary eigenvalues.
%and Sylvester equation \eqref{eq:sylveq} to a linear equation with the solution $\Pss_\alpha = (\iu \alpha \bt I - \Aer)^{-1} \ber \in \C^{N}$.
The steady-state response of this pulse is given by $\Fss( e^{\iu \alpha \boldsymbol{\cdot}}) =  (\cer \Pss_\alpha+ \der) e^{\iu \alpha \boldsymbol{\cdot}}$. Using that the cosine and sine functions are the real and imaginary parts of the exponential pulse, i.e., for any $t \geq 0$ it holds
%% $\mathrm{Re}(\boldsymbol\cdot)$ and $\mathrm{Im}(}boldsymbol\cdot)$ indicate taking the real and imaginary part, respectively. It holds
\begin{align*}
	\cos( \alpha t) = \mathrm{Re}(e^{\iu \alpha t}), \quad 	\sin( \alpha t) = \mathrm{Im}(e^{\iu \alpha t}),
\end{align*}
we can derive a convenient representation of $\Fss(\basVec_\ell)$ for $\ell = 1, \ldots, 2K$. For the cosine functions that is
\vspace{-0.8cm}
{\small
\begin{align*}
	\Fss(\cos( \alpha \,\boldsymbol{\cdot})) &=  \cer \mathrm{Re} \left( \Pss_\alpha \bigl( \mathrm{Re}(e^{\iu \alpha \boldsymbol{\cdot}}) + \iu \, \mathrm{Im} ( e^{\iu \alpha \boldsymbol{\cdot}}) \bigr ) \right) + \der \cos( \alpha \,\boldsymbol{\cdot}) \\ 
	&=	(\cer \mathrm{Re}(\Pss_\alpha)+ \der) \cos(\alpha \,\boldsymbol{\cdot}) -  (\cer \mathrm{Im}(\Pss_\alpha)) \sin(\alpha \,\boldsymbol{\cdot}),
\end{align*}
}
%%\begin{align*}
%%	\Fss(\cos( \alpha \,\boldsymbol{\cdot})) &=  \cer \mathrm{Re} \left( (\mathrm{Re}(\Pss)+ \iu \, \mathrm{Im}(\Pss))  ( \mathrm{Re}(e^{\iu \alpha \boldsymbol{\cdot}}) + \iu \, \mathrm{Im}(e^{\iu \alpha \boldsymbol{\cdot}})\right) + \der \cos( \alpha \,\boldsymbol{\cdot}) \\ 
%%	&=	(\cer \mathrm{Re}(\Pss)+ \der) \cos(\alpha \,\boldsymbol{\cdot}) -  (\cer \mathrm{Im}(\Pss)) \sin(\alpha \,\boldsymbol{\cdot}).
%%\end{align*}
and, by a similar calculation, it follows
\vspace{-0.8cm}
{\small
$$
\Fss(\sin( \alpha \,\boldsymbol{\cdot})) =  (\cer \mathrm{Im}(\Pss_\alpha)) \cos(\alpha \,\boldsymbol{\cdot}) +  (\cer \mathrm{Re}(\Pss_\alpha) + \der ) \sin(\alpha \,\boldsymbol{\cdot}).
$$
}
All in all, by employing the linearity of $\Fss$ and exploiting that  $\{\basVec_0, \ldots, \basVec_{2K}\}$ and $\{\Fss(\basVec_0), \ldots, \Fss(\basVec_{2K})\}$ are orthogonal sets of functions with respect to the $\funSpace{L}^2_T$-scalar product, we conclude the following lemma.
\begin{lem}\label{lem:SteadyFourier}
The steady-state response of system \eqref{eq:genersys} with $\uss=\uss^K$ as in \eqref{eq:FourSig}, i.e. $\Fss(\uss^K) =\sum_{\ell=0}^{2K} \lambda_\ell \Fss(\basVec_\ell)$, fulfills
\begin{align*}
(\| \Fss(\uss^K) \|_{\funSpace{L}^2_T})^2 &= 
%%{\sum_{\ell=0}^{2K} \left( \lambda_\ell \|\Fss(\basVec_\ell)\|_{\funSpace{L}_T^2}  \right)^2} \\
T  \left| \cer \Pss_0 + \der \right|^2 \lambda_0^2 \\
&\quad + \frac{T}{2} \sum_{\ell=1}^{K}   \left| \cer \Pss_\ell +\der \right|^2 (\lambda_\ell^2+\lambda_{K+\ell}^2),
\end{align*}
whereby $\Pss_\ell$ for $\ell = 0,\ldots,K$ is given by \eqref{eq:Pss_alpha}. Further, the steady-state has the initial conditions
\begin{align*}
\xer_{{\rm st},0} := \lambda_0 \Pss_0 + \sum_{\ell=1}^K \lambda_\ell \mathrm{Re}(\Pss_\ell)  + \lambda_{K+ \ell} \mathrm{Im}(\Pss_\ell).
\end{align*}
\end{lem}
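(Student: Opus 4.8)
The plan is to exploit the linearity of $\Fss$ together with the orthogonality structure of the trigonometric system on $[0,T]$. Writing $\Fss(\uss^K) = \sum_{\ell=0}^{2K}\lambda_\ell\,\Fss(\basVec_\ell)$, I would first establish that $\{\Fss(\basVec_0),\dots,\Fss(\basVec_{2K})\}$ is an orthogonal set in $\funSpace{L}^2_T$ (as already recorded in the discussion preceding the lemma), then compute each individual norm $\|\Fss(\basVec_\ell)\|_{\funSpace{L}^2_T}$, so that the Pythagorean identity yields the claimed formula. The initial-condition statement is then obtained by evaluating the associated state steady-state at $t=0$.

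Concretely, set $\alpha_\ell := 2\pi\ell/T$, so that $\Pss_\ell = \Pss_{\alpha_\ell}$ in the notation of \eqref{eq:Pss_alpha}. For $\ell = 0$ one has $\Pss_0 = (-\Aer)^{-1}\ber \in \R^N$ and $\basVec_0\equiv 1$, whence $\Fss(\basVec_0) = \cer\Pss_0 + \der$ is a constant function with $\|\Fss(\basVec_0)\|_{\funSpace{L}^2_T}^2 = T\,|\cer\Pss_0 + \der|^2$. For $\ell = 1,\dots,K$ I would use the explicit expressions for $\Fss(\cos(\alpha_\ell\,\boldsymbol{\cdot}))$ and $\Fss(\sin(\alpha_\ell\,\boldsymbol{\cdot}))$ derived just above the lemma. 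Abbreviating $p_\ell := \cer\,\mathrm{Re}(\Pss_\ell) + \der = \mathrm{Re}(\cer\Pss_\ell + \der)$ and $q_\ell := \cer\,\mathrm{Im}(\Pss_\ell) = \mathrm{Im}(\cer\Pss_\ell + \der)$ (using that $\cer$ and $\der$ are real), both $\Fss(\basVec_\ell)$ and $\Fss(\basVec_{K+\ell})$ lie in $\mathrm{span}\{\cos(\alpha_\ell\,\boldsymbol{\cdot}),\sin(\alpha_\ell\,\boldsymbol{\cdot})\}$, with coordinate map the matrix $\bigl[\begin{smallmatrix} p_\ell & q_\ell \\ -q_\ell & p_\ell\end{smallmatrix}\bigr]$. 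Since $\|\cos(\alpha_\ell\,\boldsymbol{\cdot})\|_{\funSpace{L}^2_T}^2 = \|\sin(\alpha_\ell\,\boldsymbol{\cdot})\|_{\funSpace{L}^2_T}^2 = T/2$ with $\cos(\alpha_\ell\,\boldsymbol{\cdot})\perp\sin(\alpha_\ell\,\boldsymbol{\cdot})$, and since that matrix is a scalar multiple of a rotation with $p_\ell^2 + q_\ell^2 = |\cer\Pss_\ell + \der|^2$, it follows that $\Fss(\basVec_\ell)\perp\Fss(\basVec_{K+\ell})$ and $\|\Fss(\basVec_\ell)\|_{\funSpace{L}^2_T}^2 = \|\Fss(\basVec_{K+\ell})\|_{\funSpace{L}^2_T}^2 = \tfrac{T}{2}\,|\cer\Pss_\ell + \der|^2$.

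It remains to handle cross terms between different frequency blocks: for $\ell \neq m$ the functions $\Fss(\basVec_\ell)$ and $\Fss(\basVec_m)$ lie in spans of pure harmonics of distinct integer order on $[0,T]$ (or in the constants, for index $0$), hence are $\funSpace{L}^2_T$-orthogonal by the classical relations $\int_0^T\cos(\alpha_\ell t)\cos(\alpha_m t)\,dt = 0$, etc. Combining this with the within-block orthogonality of the previous step shows $\{\Fss(\basVec_\ell)\}_{\ell=0}^{2K}$ is orthogonal, so $\|\Fss(\uss^K)\|_{\funSpace{L}^2_T}^2 = \sum_{\ell=0}^{2K}\lambda_\ell^2\,\|\Fss(\basVec_\ell)\|_{\funSpace{L}^2_T}^2$; inserting the norms computed above and grouping $\lambda_\ell^2 + \lambda_{K+\ell}^2$ gives exactly the asserted expression. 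For the initial conditions I would invoke that the state steady-state of \eqref{eq:genersys}--\eqref{eq:siggen} is $\Pss\xiss(\boldsymbol{\cdot})$ (Section~\ref{sec:steadystate}) and, by linearity in $\uss$, equals $\sum_\ell\lambda_\ell$ times the state steady-state driven by $\basVec_\ell$: for $\basVec_0$ that is the constant $\Pss_0$, for $\cos(\alpha_\ell\,\boldsymbol{\cdot}) = \mathrm{Re}(e^{\iu\alpha_\ell\,\boldsymbol{\cdot}})$ it is $\mathrm{Re}(\Pss_\ell e^{\iu\alpha_\ell\,\boldsymbol{\cdot}})$, and for $\sin(\alpha_\ell\,\boldsymbol{\cdot})$ it is $\mathrm{Im}(\Pss_\ell e^{\iu\alpha_\ell\,\boldsymbol{\cdot}})$. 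Evaluating at $t=0$ and summing yields $\xer_{{\rm st},0} = \lambda_0\Pss_0 + \sum_{\ell=1}^K\lambda_\ell\,\mathrm{Re}(\Pss_\ell) + \lambda_{K+\ell}\,\mathrm{Im}(\Pss_\ell)$.

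All computations are elementary substitutions; the only point that needs care — and what I would regard as the main obstacle — is the orthogonality bookkeeping: checking that $\Fss$ does not mix distinct Fourier frequencies, so that the image family stays orthogonal and Pythagoras applies, and verifying within a block that the $2\times 2$ response matrix is conformal so that $p_\ell^2 + q_\ell^2$ collapses to $|\cer\Pss_\ell + \der|^2$. Everything else then follows by plugging in.
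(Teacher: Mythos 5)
Your proposal is correct and follows essentially the same route as the paper, which states the lemma as a direct consequence of the preceding derivation: the explicit formulas for $\Fss(\cos(\alpha\,\boldsymbol{\cdot}))$ and $\Fss(\sin(\alpha\,\boldsymbol{\cdot}))$, the linearity of $\Fss$, and the orthogonality of $\{\Fss(\basVec_0),\dots,\Fss(\basVec_{2K})\}$ in $\funSpace{L}^2_T$, combined with Pythagoras and evaluation of the state steady-state at $t=0$. You merely spell out the bookkeeping (the conformal $2\times 2$ block giving $p_\ell^2+q_\ell^2=|\cer\Pss_\ell+\der|^2$ and the cross-frequency orthogonality) that the paper leaves implicit.
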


\begin{rem}
The quantities $\cer \Pss_\ell$, $\ell = 0, \ldots ,K$, are the so-callled moments of the error system \eqref{eq:errSys} at the frequencies $s_\ell = \iu \ell$, cf. \cite{book:antoulas2005}. In other words, they are the differences in the moments of the \acro{FOM} and the \acro{ROM}.
\end{rem}

%%%
\section{A posteriori error bound} \label{sec:MainResultErr}
The main result of this paper relies on the Fourier series approximation $\uss^K$ of the given input $u$, and a specific splitting of the reduction error. The error is decomposed into the steady-state and transient response induced by $\uss^K$ and the initial conditions, and the response originating from the remainder of the input $u-\uss^K$. Different techniques are used to bound these three terms individually.
\begin{thm}[A posteriori error bound]\label{thrm:aposteriori}
Let Assumption~\ref{assum:RomRequired} hold, considering the square-integrable input \mbox{$u:[0,T]\to \R$} and initial conditions $\bv x_0$ and $\rom{\bv x}_0$. Let $\Qer$ be as in \eqref{eq:erInitCond} and $K \in \mathbb{N}$. Let $\uss^K$, $ \Fss(\uss^K)$ and $\xer_{{\rm st},0}$ be as in Lemma~\ref{lem:SteadyFourier}.\\ Then the reduction error $\errom = y - \rom{y }$ is bounded by
\begin{align*}
\| \errom \|_{\funSpace{L}^2_T} \leq  \gamma_{K,u, \bv x_0} \hspace{5.4cm}\\
\gamma_{K,u, \bv x_0} = \| \Fss(\uss^K) \|_{\funSpace{L}^2_T}   + \sqrt{
 \xer_{c}^T  \Qer
\xer_{c}}
+ \, \alpha   \|u- \uss^K \|_{\funSpace{L}^2_T},
\end{align*}
where $\xer_{c} =[\bv{x}_{0}^T, \rom{\bv{x}}_{0}^T]^T- \xer_{{\rm st},0} \in \R^{N+n}$.
\end{thm}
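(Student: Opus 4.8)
The plan is to use the linearity of the error system \eqref{eq:errSys} to split $\errom$ into three contributions, bound each one with a different tool assembled in Sections~\ref{sec:BalRedResults}--\ref{sec:steadyExp}, and then add the estimates via the triangle inequality for $\|\cdot\|_{\funSpace{L}^2_T}$. Recall from the discussion after \eqref{eq:errSys} that the output of \eqref{eq:errSys} with input $\uer=u$ and initial state $\xer_0:=[\bv x_0^T,\rom{\bv x}_0^T]^T$ is exactly $\errom$. Writing $u=\uss^K+(u-\uss^K)$ and $\xer_0 = \xer_{{\rm st},0}+\xer_c$ with $\xer_c$ as in the statement, and using that the map $(\xer_0,\uer)\mapsto\yer$ is linear, I would write $\errom = \yer_{\rm st}+\yer_{\rm tr}+\yer_{\rm res}$, where $\yer_{\rm st}$ is the output of \eqref{eq:errSys} driven by $\uss^K$ from the initial state $\xer_{{\rm st},0}$, $\yer_{\rm tr}$ is the output with zero input and initial state $\xer_c$, and $\yer_{\rm res}$ is the output driven by $u-\uss^K$ from the zero state.

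The key step is to identify $\yer_{\rm st}$ with the steady-state response $\Fss(\uss^K)$, \emph{without} any transient remainder. To this end I would represent $\uss^K$ by a real signal generator of the form \eqref{eq:siggen}, assembled from the scalar exponential generators $\xi_\alpha$ of \eqref{eq:Pss_alpha} at the Fourier frequencies $\alpha=2\pi\ell/T$, $\ell=0,\dots,K$, whose real and imaginary parts produce the basis functions $\basVec_\ell$, so that $\uss^K=\css\xiss$ for an appropriate $\xiss_0$. Its system matrix $\Ass$ has eigenvalues on the imaginary axis only, hence none in common with the Hurwitz matrix $\Aer$; thus \eqref{eq:sylveq} is uniquely solvable for the corresponding block $\Pss$, and the output decomposition of Section~\ref{sec:steadystate} applies, giving $\yer_{\rm st} = \Fss(\uss^K) + \cer e^{\Aer\,\boldsymbol{\cdot}}(\xer_{{\rm st},0}-\Pss\xiss_0)$. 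Now $\Pss$ acts on $\xiss_0$ precisely through $\Pss_0$ and $\mathrm{Re}(\Pss_\ell),\mathrm{Im}(\Pss_\ell)$ in the combination that Lemma~\ref{lem:SteadyFourier} records as $\xer_{{\rm st},0}$, so $\Pss\xiss_0=\xer_{{\rm st},0}$, the transient term vanishes, and $\yer_{\rm st}=\Fss(\uss^K)$ on $[0,T]$. In particular $\|\yer_{\rm st}\|_{\funSpace{L}^2_T}=\|\Fss(\uss^K)\|_{\funSpace{L}^2_T}$, whose value is the one given in Lemma~\ref{lem:SteadyFourier}.

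The remaining two terms are handled by the quoted results directly. Since $\yer_{\rm tr}$ is the free response of \eqref{eq:errSys} from the state $\xer_c$, inequality \eqref{eq:erInitCond} gives $\|\yer_{\rm tr}\|_{\funSpace{L}^2_T}\le\sqrt{\xer_c^T\Qer\xer_c}$. Since $\yer_{\rm res}$ is, by the construction of \eqref{eq:errSys}, exactly the reduction error for the input $u-\uss^K$ under zero initial conditions, Assumption~\ref{assum:RomRequired}-\ref{cond:balrel-i}$_)$ gives $\|\yer_{\rm res}\|_{\funSpace{L}^2_T}\le\alpha\|u-\uss^K\|_{\funSpace{L}^2_T}$. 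Adding the three bounds via the triangle inequality yields $\|\errom\|_{\funSpace{L}^2_T}\le\gamma_{K,u,\bv x_0}$, as claimed.

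The main obstacle is the bookkeeping in the second paragraph: one must verify that the real signal generator built from the complex pulses reproduces $\uss^K$ together with the correct initial state $\xiss_0$, and that the block solution $\Pss$ of \eqref{eq:sylveq} — which appears in Lemma~\ref{lem:SteadyFourier} only through $\Pss_0$, $\mathrm{Re}(\Pss_\ell)$, $\mathrm{Im}(\Pss_\ell)$ and the moments $\cer\Pss_\ell$ — matches $\xer_{{\rm st},0}$ exactly, so that the transient genuinely cancels. Everything after that identification is a routine use of superposition and the triangle inequality.
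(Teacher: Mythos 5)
Your proposal is correct and follows essentially the same route as the paper: the same three-way superposition splitting of the error system's output (steady-state part driven by $\uss^K$ from $\xer_{{\rm st},0}$, free response from $\xer_c$, and the remainder driven by $u-\uss^K$ from zero), followed by the triangle inequality with Lemma~\ref{lem:SteadyFourier}, inequality \eqref{eq:erInitCond}, and the a priori bound applied to the respective terms. The only difference is that you spell out the verification that $\Pss\xiss_0=\xer_{{\rm st},0}$ so the transient cancels exactly, a detail the paper delegates to the statement of Lemma~\ref{lem:SteadyFourier}.
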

%%
%%Let $K \in \mathbb{N}$, and let Assumption~\ref{assum:RomRequired} hold. Let $\uss^K$ be the truncated Fourier series approximation \eqref{eq:FourSig} of $u$. Let $\xer_{{\rm st},0}$ and $\| \Fss(\uss^K) \|_{\funSpace{L}^2_T}$ be as in Lemma~\ref{lem:SteadyFourier}, and let $\Qer$ be the observability Gramian of the error system, cf.\,\eqref{eq:erInitCond}. Let, further, and $\xer_{c} =[\bv{x}_{0}^T, \rom{\bv{x}}_{0}^T]^T- \xer_{{\rm st},0} \in \R^{N+n}$. Then it holds
%%$
%%\| y - \rom{y } \|_{\funSpace{L}^2_T} \leq  \gamma_{K,u, \bv x_0},
%%$
%%with the error bound given by
%%%%\vspace{-0.8cm}
%%\begin{align*}
%%\gamma_{K,u, \bv x_0} &:= \| \Fss(\uss^K) \|_{\funSpace{L}^2_T} + \, \alpha   \|u- \uss^K \|_{\funSpace{L}^2_T}  + \sqrt{
%% \xer_{c}^T  \Qer
%%\xer_{c}}.
%%\end{align*}
%%\vspace{-0.8cm}
\begin{proof}
We employ the linearity of the systems to split the reduction error into three sub parts. Each of them has a representation as output of the error system \eqref{eq:errSys} with a certain choice of input  $\uer$ and initial condition $\xer(0)$. We split the reduction error according to \mbox{$\errom= \yer_{\rm st}+  \yer_{\xer_c} + \yer_{\rm rest}$}, with
\begin{itemize}
\item $\yer_{\rm st}$ obtained by input $\uer= \uss$ and $\xer(0) = \xer_{{\rm st},0}$;
\item $\yer_{\xer_c}$ obtained for trivial input $\uer \equiv0$ and $\xer(0) = \xer_{c}$;
\item $\yer_{\rm rest}$ obtained by input $\uer = u-\uss^K$ and $\xer(0) = \bv 0$.
\end{itemize}
Clearly, $\| \errom \|_{\funSpace{L}^2_T} \leq \| \yer_{\rm st} \|_{\funSpace{L}^2_T} +  \| \yer_{\xer_c} \|_{\funSpace{L}^2_T} + \| \yer_{\rm rest} \|_{\funSpace{L}^2_T} $ holds by the triangle inequality. The claimed error bound $\gamma_{K,u, \bv x_0}$ is obtained using Lemma~\ref{lem:SteadyFourier} to bound the steady-state $\yer_{\rm st}$, formula \eqref{eq:erInitCond} to bound the transient response $\yer_{\xer_c}$, and the a priori error result from Assumption~\ref{assum:RomRequired} for the rest $\yer_{\rm rest}$.
\end{proof}
\begin{rem}\label{rem:FourOrderK}
The order $K$ of the Fourier series  $\uss^K$ is the only parameter to be chosen in our error bound. We propose $K \approx 10$ as a guide number. In Section~\ref{subsec:numCDPlayer} an illustrative study of its influence is made.
\end{rem}
Let us emphasize that our error bound allows for an efficient offline-online decomposition. Almost all required quantities except for the Fourier coefficients $\lambda_\ell$ are independent of the input $u$ and the initial conditions $\bv x_0$ and can therefore be determined in the offline phase. Moreover, it can be used that
$
\|u- \uss^K \|_{\funSpace{L}^2_T} = \sqrt{\|u \|_{\funSpace{L}^2_T}^2  - \| \uss^K \|_{\funSpace{L}^2_T}^2}
$
holds due to the orthogonality of $\uss^K$ and $u$. The evaluation of the Fourier coefficients is thus the main step of the online phase, and its computational cost does not scale with the dimension of the \acro{FOM}.

\section{Numerical validation} \label{sec:NumResults}
The efficiency of our error bound is showcased with two academic benchmark examples. We draw comparisons to the well-known a priori bound (Section~\ref{sec:BalRedResults}) and illustrate the influence of the order $K$ used in the underlying Fourier series, cf.\,Remark~\ref{rem:FourOrderK}. 

The \acro{FOM}s (Beam and CD Player) used in our numerical tests are from the \texttt{SLICOT} benchmark collection \cite[Section~24]{book:dimred2003}. All numerical results have been generated using \texttt{MATLAB} Version 9.1.0 (R2016b) on an Intel Core i5-7500 CPU with 16.0GB RAM. The simulations are based on the \texttt{MATLAB} built-in integrator \texttt{ode15s} with tolerances set to '\texttt{AbsTol} = $10^{-10}$' and '\texttt{RelTol}= $10^{-7}$'. Time integrals were approximated by quadrature with the trapezoidal rule on a uniform time mesh with $2\,000$ points. Finally, the simulation time $T= 2\pi$ is used for all tests.

%%Our proposed a posteriori error bound $\gamma_{K,u,\bv x_0}$ is numerically verified and compared to the standard a priori result for balancing-related reduction from Assumption~\ref{assum:RomRequired}. We consider two academic benchmarks from the \texttt{SLICOT} benchmark collection \cite[Section~24]{book:dimred2003}, the beam (Section~\ref{subsec:numBeam}) and the CD-Player (Section~\ref{subsec:numCDPlayer}), and illustrate the qualitative behavior with varying \acro{ROM} dimension $n$ and varying order $K$ used in the truncation of the Fourier series.

%%This is done at the example of two academic benchmarks from the \texttt{SLICOT} benchmark collection \cite[Section~24]{book:dimred2003}, the Beam (Section~\ref{subsec:numBeam}) and the CD-Player (Section~\ref{subsec:numCDPlayer}). 

%%

%%For better reproducibility, the code and the benchmark data are provided in \cite{code:bls22-btinhom}.

\subsection{Beam (study in \acro{ROM} dimension $n$)} \label{subsec:numBeam}
The Beam benchmark is a single-input single-output model with $N = 349$. We simulate this \acro{FOM} with zero initial conditions and the input $u(t)=  4 \sin^3(2.7 \,t) + \e^{0.2\,t}$, $t \in [0,T]$, and compare it to simulations with \acro{ROM}s obtained by either \acro{BT} or \acro{SPA} and varying dimensions $n \in [2,30]$. As shown in  Fig.~\ref{fig:beam-ErrPlot}, our proposed bound (with parameter $K=10$) improves the error estimation of the a priori bound by about one order in average. The improvement is more pronounced for the reduction with \acro{SPA}, which also shows a smaller reduction error. In contrast to that, the a priori bound is the same for \acro{BT} and \acro{SPA}, i.e., not adapted to the setting.
%, which shows a higher fidelity here due to its better approximation properties in lower frequency ranges \cite{art:SPA-liu1989}. Our bound 
%%Moreover, just like the reduction itself, our bound shows smaller values for \acro{SPA} than for \acro{BT}. The higher fidelity of \acro{SPA} is due to its better approximation properties in the low frequency range, see \cite{art:SPA-liu1989}. 
%%, whereas the a priori bound is the same for both methods.
%%%%%%%
% BEAM; SETUP MATLAB 
%%ufun =  @(t_) 4* sin(2.7*t_).^3 + exp(0.2*t_);
%%tev = linspace(0,2*pi, 2e3);
%%Order Fourier series: K = 8; % for bound

%%%%%%%%%%%%
\begin{figure}[tb]
%\RawFloats
{\center \hspace{0.2cm} \underline{Error vs. bounds (varying $n$)}}\\[0.2em]
\hspace{0.5cm} \underline{\acro{ROM} by \acro{BT}} \hspace{2.2cm} \underline{\acro{ROM} by \acro{SPA}} 
\includegraphics[height=3.0cm]{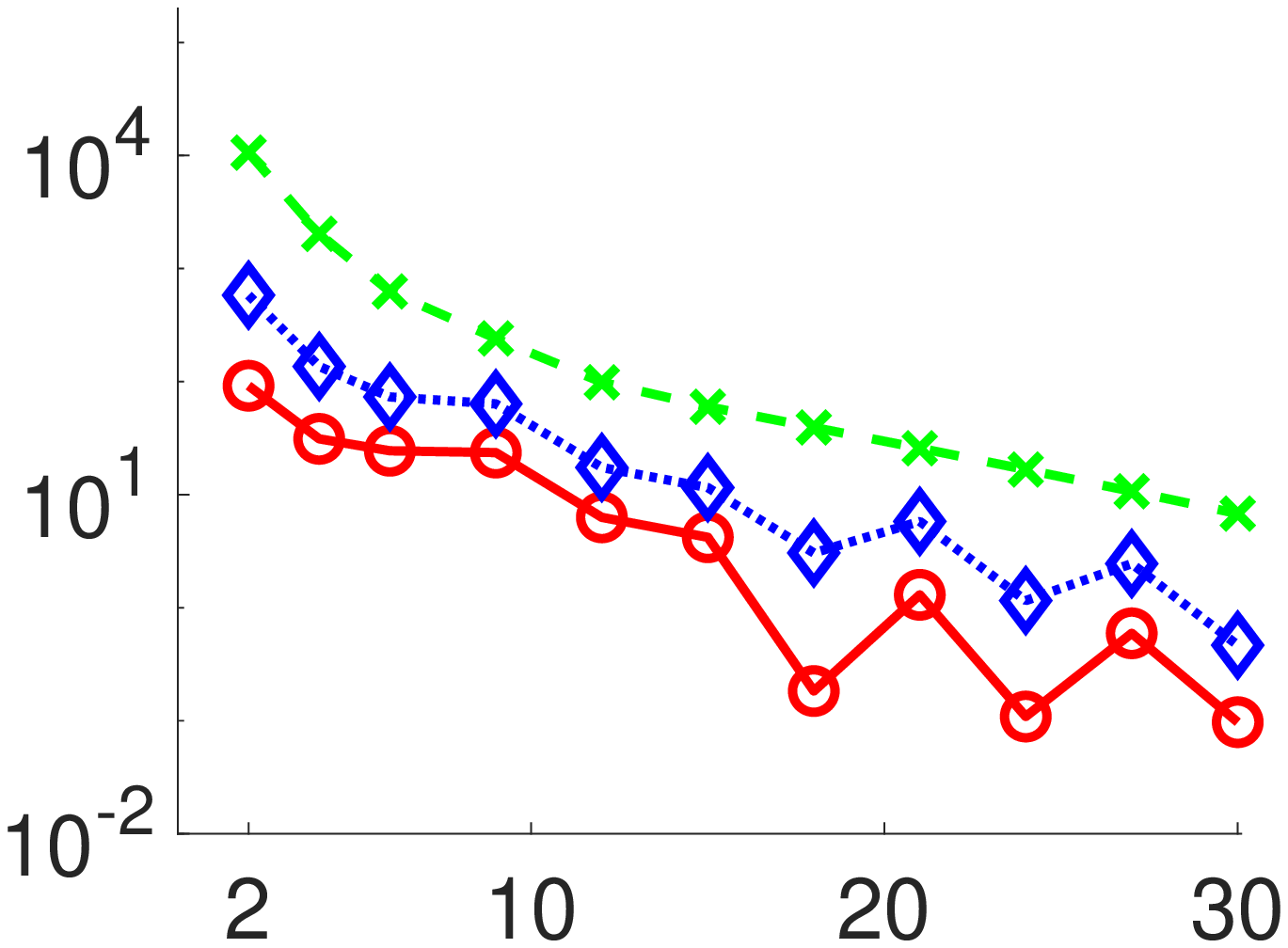}    \includegraphics[height=3.0cm]{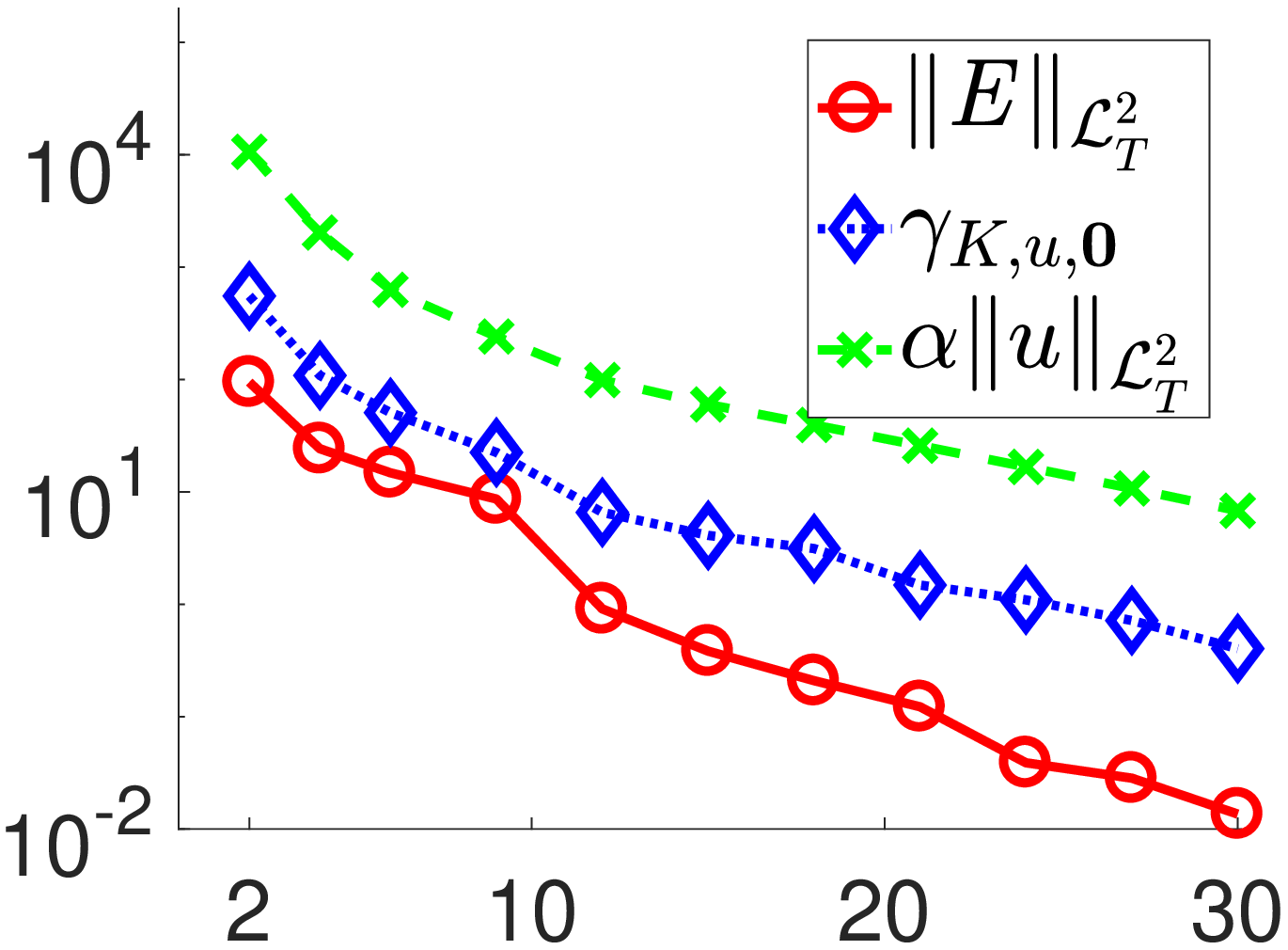}\\[0cm]
{\footnotesize \hspace{0.5cm} \acro{ROM} dimension $n$ \hspace{1.7cm}  \acro{ROM} dimension $n$}
\caption{Beam.  Reduction error versus the proposed a posteriori bound $\gamma_{K,u,\bv x_0}$ (with $\bv x_0 = \bv 0$ and $K=10$) and the standard a priori bound.
\label{fig:beam-ErrPlot}}                                                 
\end{figure}

\subsection{CD Player (study in order $K$ of Fourier series)} \label{subsec:numCDPlayer}
We consider the CD Player from the \acro{SLICOT} benchmark collection, which is a multi-input multi-output model with $N=120$. Since the paper is restricted to the single-input single-output case, we replace the input matrix by its first column and the output matrix by its second row for our numerical tests. This \acro{FOM} is reduced by \acro{BT} using $n=8$. We focus here on the influence of the parameter $K$ on the performance of our bound. For the comparisons, we adapt the a priori bound according to \cite{art:bls-InhomInit}, where errors related to initial conditions are bounded separately from the  input using \eqref{eq:erInitCond}, which yields the bound $\Delta_{\bv x_0} := \xer_0^T \Qer \xer_0$ with $\xer_0$ given as the initial conditions of the error system.

For the simulations, we choose $\bv x_0 = [1,\ldots,1]^T/100$, and the input
$$
u(t) = 
\begin{cases}
\hspace{0.4cm} t\hspace{0.5cm} - 8 \exp(-t/2) & t\leq \pi \\
(\pi-t) - 8 \exp(-t/2) & t> \pi
\end{cases},
$$
which has a discontinuity at $t = \pi$, see Fig.~\ref{fig:cdpla-input} for a plot in time. The nonzero initial conditions and the input discontinuity result in two peaks in the output response, cf.~Fig.~\ref{fig:cdpla-output}. 
%%%%
\begin{figure}[tb]
\floatbox[{\capbeside\thisfloatsetup{capbesideposition={right,top},capbesidewidth=2.8cm}}]{figure}[\FBwidth]
{\caption{CD Player. Plot in time for used input $u$ and two Fourier series approximations of different order.
\label{fig:cdpla-input}
}}
{\includegraphics[height=3.5cm]{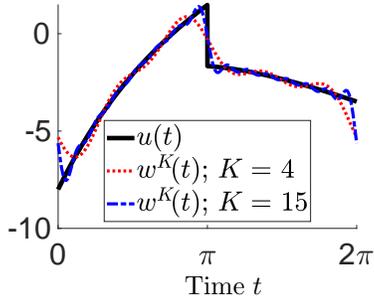}}     
{Time $t$\hspace{2.7cm}}               
\end{figure}
%%
%%%%
\begin{figure}[tb]
\floatbox[{\capbeside\thisfloatsetup{capbesideposition={right,top},capbesidewidth=2.8cm}}]{figure}[\FBwidth]
{\caption{CD Player. Plot in time of outputs $y$ and $\rom{y}$ for the \acro{FOM} and \acro{ROM}, respectively.
\label{fig:cdpla-output}
}}
{\includegraphics[height=3.5cm]{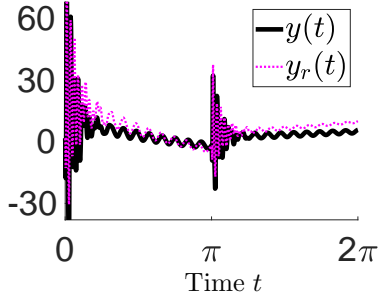}}
{Time $t$\hspace{2.7cm}}                      
\end{figure}
%%%%%%%%%%%%
%plot CD-player; bound varying Fourier order K
\begin{figure}[tb]
%\RawFloats
%%{\center \hspace{0.2cm} \underline{Study in order $K$}}\\[0.2em]
\underline{Fourier series quality} \hspace{1.1cm} \underline{Error vs. bounds}\\[0.2em]
\begin{tabular}{l|l}
\includegraphics[height=2.8cm]{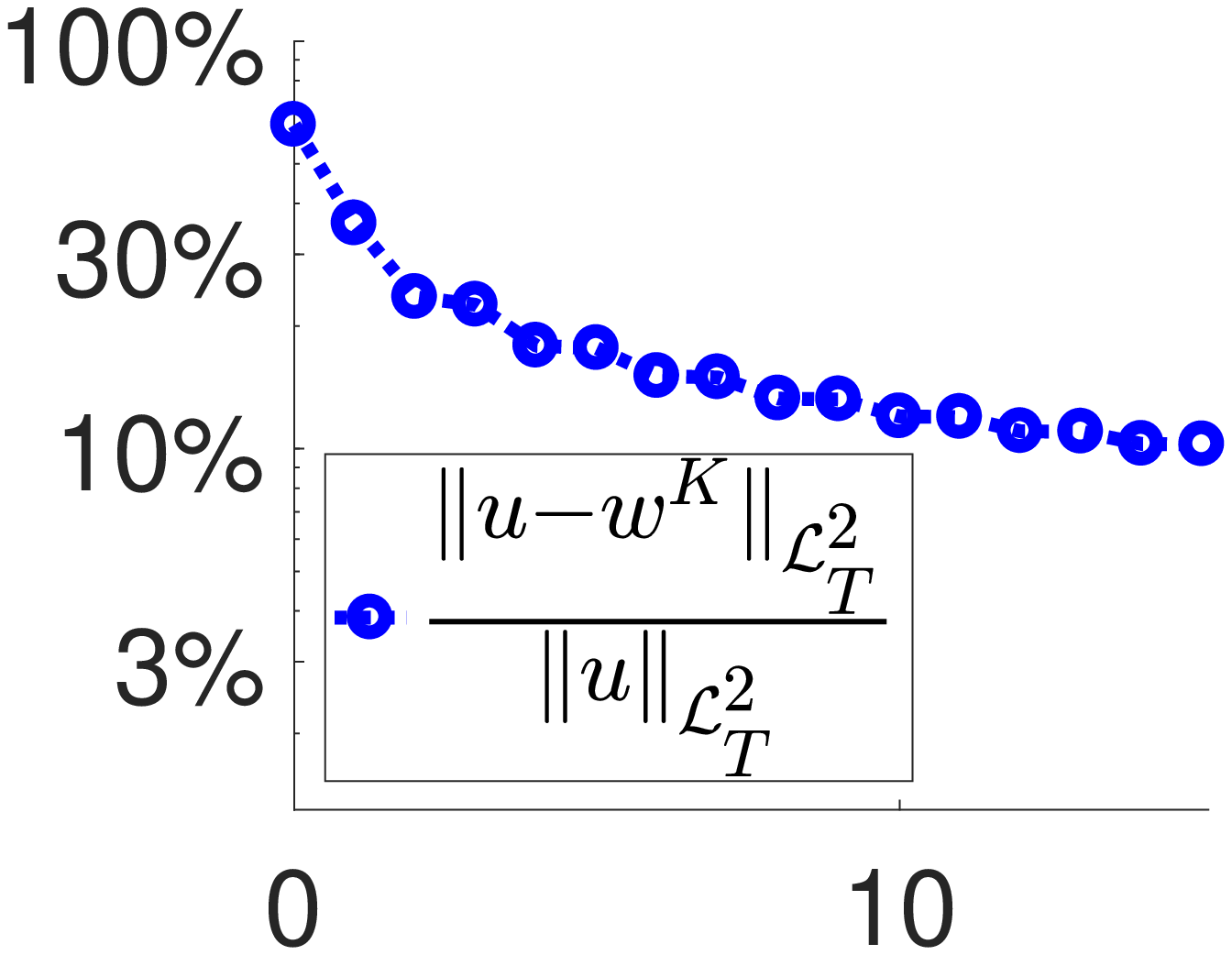} \hspace{0.2cm}  
&%newcolumn
  \includegraphics[height=2.8cm]{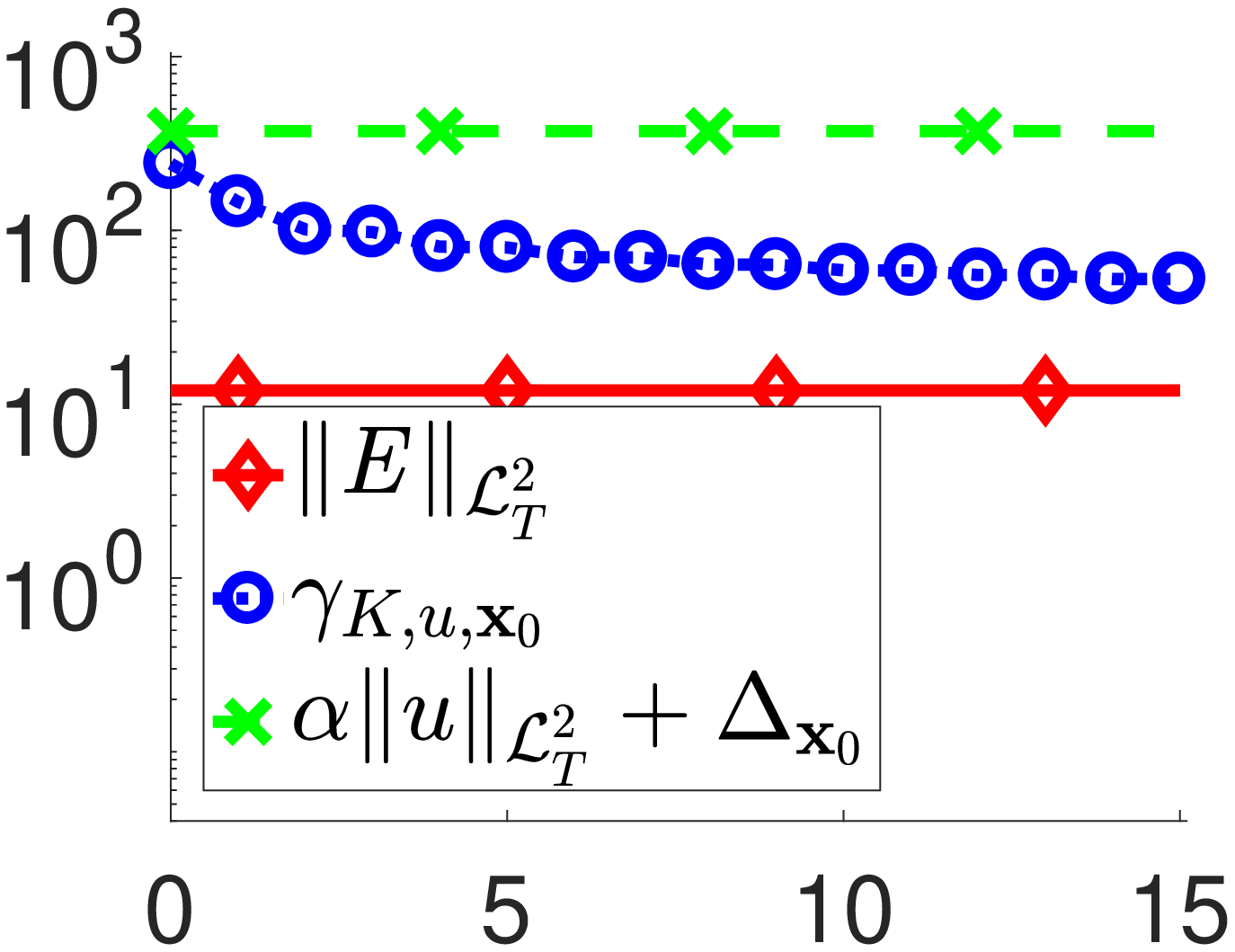}
\end{tabular}\\[0.0em]
{\footnotesize \hspace{1.2cm}Order $K$ of $\uss^K$ \hspace{1.7cm}  Order $K$ of $\uss^K$}
\caption{CD Player, parameter study in $K$. \textit{Left:} Relative error of Fourier series approximation. \textit{Right:} Reduction error versus the proposed a posteriori bound $\gamma_{K,u,\bv x_0}$ and the standard a priori bound (using $\Delta_{\bv x_0}$ as in \cite{art:bls-InhomInit} for the initial conditions). \acro{ROM} obtained by \acro{BT} with $n=8$.
\label{fig:cdpla-ErrorPlot}}   
\end{figure}
Moreover, the discontinuity implies a slow decay of the Fourier series approximation of $u$.  This is illustrated in Fig.~\ref{fig:cdpla-ErrorPlot} for $K\in [0,15]$, where also the resulting a posteriori bound in comparison to the a priori bound and the reduction error is shown. It is seen that the Fourier series approximation $\uss^K$ has still an error of about $10\%$ for  $K=15$. The error is overestimated by less than one order by our a posteriori error bound with $K\geq 10$, which is a significant improvement compared to the almost two order of magnitude observed for the a priori bound.
%% Compared to the actual reduction error, our a posteriori error bound attains an overestimation of less than one order of magnitude. 
%%
This small example showcases that our a posteriori bound is still effective even if the underlying Fourier series approximation only has a mediocre approximation quality.

%%Arguably, this is still 
%%
%%The \acro{FOM} is compared with a \acro{ROM} of dimension $n = 8$, which is obtained by \acro{BT}. The outputs of these models have two peaks, see Fig.~\ref{fig:cdpla-output}. The one at $t=0$ is due to the nonzeros initial conditions, and the one at $t=\pi$ is due to the discontinuity of $u$. The reduction error, the a priori bound, as well as our a posteriori bound for varying order $K$ are shown in Fig.~\ref{fig:cdpla-ErrorPlot}-right. In the a priori bound, the reduction error related to the input and the one related to the initial conditions are estimated separately, whereas the bound $\Delta_{\bv x_0}$ relies on relation \eqref{eq:erInitCond}, cf.~\cite{art:bls-InhomInit}. As to be expected, the improvements of our error bound over the a priori bound is limited by the quality of the underlying Fourier series approximation. It is slightly below one order, which is the most one could hope for considering the approximation error of the Fourier series with still about $10\%$ for $K=15$ (Fig.~\ref{fig:cdpla-ErrorPlot}-left).
%%
%%To summarize, the quality of the underlying Fourier series approximation plays an important role for the sharpness of our a posteriori bound, but it s not required to have a very high quality in order to see an improvement over the a priori bound in the error estimation.

%%%
\section*{Conclusion}
We proposed an a posteriori extension of the well-konwn a priori error bound for balancing-related model reduction. It alleviates the worst-case type error estimation that is inherent in the a priori error bound. The idea is to filter out an approximation on the input signal, for which a more explicit error analysis can be done in an efficient manner. In this paper, we used a truncated Fourier series approximation of the input and derived an efficient offline-online decomposition for it.

On a final note, we would like to point towards possible extensions of our result.
For certain applications, it could be interesting to study other input approximations, e.g., using signal generators related to a known frequency range of interest. The extension of our bound for time-limited balanced truncation is straight forward, using the results from \cite{art:redmann2020lt2}, cf., Remark~\ref{rem:timeLimited}. Multi-input multi-output systems could also be considered, but this requires a separate approximation of each of the input components.

\bibliographystyle{abbrv}   %TODO remove bjoern
\bibliography{ssbound}% common bib file
%% if required, the content of .bbl file can be included here once bbl is generated
%%\input sn-article.bbl

%% Default %%
%%\input sn-sample-bib.tex%

\end{document}